\theoremstyle{plain}\newtheorem{Theorem}{Theorem}
\theoremstyle{plain}
\theoremstyle{plain}
\theoremstyle{plain}
\theoremstyle{plain}
\theoremstyle{definition}
\theoremstyle{definition}
\theoremstyle{definition}
\theoremstyle{definition}
\theoremstyle{definition}
\theoremstyle{definition}
\theoremstyle{definition}
\def\Br{\mathrm{Br}}
\title{A block theoretic proof of Thompson's $A\times B$-Lemma} 
\author{Radha Kessar and Markus Linckelmann} 
\date{\today}
\begin{document}

\begin{abstract}
We show that Thompson's $A\times B$-Lemma can be obtained as a consequence
of  the Brauer pair version of Brauer's Third Main Theorem.
\end{abstract}

\maketitle

Let $k$ be an algebraically closed field of prime characteristic $p$.
Brauer's Third Main Theorem \cite[Theorem 3]{Brauer64}, rephrased using Brauer pairs 
(cf. \cite[Theorem 3.13]{AlBr} or 
\cite[Theorem 6.3.14]{LiBookII}), states that if $b$ is the principal block idempotent of
a finite group algebra $kG$, then $\Br_Q(b)$ is the principal block idempotent of 
$kC_G(Q)$,  for any $p$-subgroup $Q$ of $G$. Here $\Br_Q : (kG)^Q\to kC_G(Q)$
denotes the Brauer homomorphism (cf. \cite[\S 1.2]{Broue79} or \cite[Theorem 5.4.1]{LiBookI}).
In particular, if $kG$ has a unique block, 
then $kC_G(Q)$ has a unique block.  We use this to give a proof of the following result.

\begin{Theorem}[{Thompson's $A \times B$-Lemma; cf. \cite[Chapter 5, Theorem 3.4]{Gor}}]
Let $A \times B$ be a subgroup of the automorphism group of a finite $p$-group $P$,
with $A$ a $p'$-group and $B$ a $p$-group. If $A$ acts trivially on $C_P(B)$, then $A=1$.
\end{Theorem}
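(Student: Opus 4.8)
\emph{Strategy.} The plan is to deduce the Theorem from the stated consequence of the Brauer pair version of Brauer's Third Main Theorem, applied to the single group
\[
G \ :=\ P \rtimes (A \times B),
\]
in which $P$ is normal and $A \times B$ acts on $P$ via the given inclusion $A \times B \hookrightarrow \Aut(P)$. Set $S := P \rtimes B \le G$. Since $G/S \cong A$ is a $p'$-group, $S$ is a normal Sylow $p$-subgroup of $G$, so $S = O_p(G)$. The only point of the argument that requires a genuine computation is the inclusion $C_G(S) \subseteq S$. To see it, take $x \in C_G(S)$ and write $x = uab$ with $u \in P$, $a \in A$, $b \in B$; since $x$ centralises $P \le S$, the element $ab$ of $A \times B \le \Aut(P)$ must be an inner automorphism of $P$. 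But $\Inn(P)$ is a $p$-group, whereas $a \in A \le \Aut(P)$ is a $p'$-element commuting with the $p$-element $b \in B \le \Aut(P)$; as $ab \in \Inn(P)$ is then a $p$-element with $p'$-part $a$, we get $a = 1$, using that $A \times B$ acts faithfully on $P$. Hence $x = ub \in P \rtimes B = S$, so $C_G(S) \subseteq S$, i.e. $C_G(S) = Z(S)$.

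Next I claim that $kG$ has a unique block. Let $b_0$ be the principal block idempotent of $kG$. Since $C_G(S) = Z(S)$ is a $p$-group, $kC_G(S)$ is local, and its principal (hence only) block idempotent is $1$; so Brauer's Third Main Theorem gives $\Br_S(b_0) = 1$. If $b$ is a block idempotent of $kG$ with $b \ne b_0$, then $\Br_S(b) = \Br_S(b)\Br_S(b_0) = \Br_S(b b_0) = 0$. On the other hand $S = O_p(G)$ is a Sylow $p$-subgroup of $G$, so every defect group of every block of $kG$ equals $S$, whence $\Br_S(b) \ne 0$ — a contradiction. Therefore $b_0 = 1$ and $kG$ has a unique block.

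Now apply the stated consequence of Brauer's Third Main Theorem to the $p$-subgroup $B$ of $G$: the algebra $kC_G(B)$ has a unique block. A direct computation gives $C_G(B) = C_P(B) \rtimes (A \times Z(B))$, and because both $A$ (by the hypothesis that $A$ acts trivially on $C_P(B)$) and $Z(B)$ act trivially on $C_P(B)$, this semidirect product is in fact the direct product $\bigl(C_P(B) \times Z(B)\bigr) \times A$. Since $C_P(B) \times Z(B)$ is a $p$-group, $k\bigl(C_P(B) \times Z(B)\bigr)$ is local, so the blocks of $kC_G(B) \cong k\bigl(C_P(B) \times Z(B)\bigr) \otimes_k kA$ are in bijection with the blocks of $kA$, that is, with the conjugacy classes of the $p'$-group $A$. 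As $kC_G(B)$ has only one block, $A$ has only one conjugacy class, i.e. $A = 1$.

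\emph{Where the difficulty lies.} Essentially everything above is a formal application of Brauer's Third Main Theorem; the one step I expect to require actual work is the verification that $C_G(S) \subseteq S$ — equivalently, that $G$ is $p$-constrained with $O_{p'}(G) = 1$ — and this is exactly the point at which faithfulness of the action of $A \times B$ on $P$ is used. The hypothesis that $A$ acts trivially on $C_P(B)$ enters only at the very end, to ensure that $C_G(B)$ decomposes as a direct product having $A$ as a factor (without it, $C_G(B)$ could have a unique block even when $A \ne 1$, so the conclusion would fail).
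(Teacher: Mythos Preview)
Your argument is correct and follows the same overall strategy as the paper: form $G=P\rtimes(A\times B)$, observe that $kG$ has a unique block, apply Brauer's Third Main Theorem to a well-chosen $p$-subgroup, and read off $A=1$ from the block count of the centraliser. The differences are in the details. The paper applies the Third Main Theorem to $Q=C_P(B)\times B$ and then has to verify that $Q$ is self-centralising in $S=P\rtimes B$ (an elementwise computation) in order to see that $C_G(Q)=C_S(Q)\times A$; you instead apply it to the smaller subgroup $B$, for which $C_G(B)=C_P(B)\times Z(B)\times A$ is immediate once the hypothesis on $A$ is invoked, so your end-game is a little cleaner. Conversely, the paper simply cites the fact that $kG$ has a unique block, whereas you supply a self-contained derivation via $C_G(S)\subseteq S$ and the Third Main Theorem itself (using that $O_p(G)=S$ lies in every defect group); this is a nice touch in keeping with the paper's theme, though it is where the faithfulness of the $A\times B$-action is spent in your version. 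Either choice of $p$-subgroup works, and neither approach is materially harder than the other.
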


\begin{proof}
Consider the group $G=P \rtimes (A\times B)$, where the notation is as in the
statement, and suppose that $A$ acts trivially on $C_P(B)$. Note that $S=P\rtimes B$
is the unique, and hence normal, Sylow $p$-subgroup of $G$, and that $A$ can be regarded
as a $p'$-subgroup of the automorphism group of $S$. Thus $kG$ has a unique block
(cf. \cite[Corollary 6.2.9]{LiBookII}). 

By the assumptions, $A$ acts trivially on the group $Q = C_P(B) \times B$.
That is, we have $A \leq C_G(Q)$, and hence $C_G(Q) = C_S(Q) \rtimes A$.
By the above, $kC_G(Q)$ has a unique block.
We show now that $Q$ is self-centralising in $S$.
Let $x\in C_S(Q)$. Write $x = yu$ for some $y\in P$ and $u\in B$. Since 
$u\in B$, it follows that conjugation by $u$ preserves the decomposition
$Q = C_P(B) \times B$. Thus conjugation by $y$ preserves this decomposition as well.
In  particular, $y$ normalises $B$. By elementary group theory, it follows that
$y$ centralises $B$. Indeed,
if $u\in$ $B$, then $yuy^{-1}u^{-1}  \in P \cap B  =1 $.
This shows that
$C_S(Q) \leq Q \leq C_G(A)$, so $C_G(Q) = C_S(Q) \times A$. But then $kC_G(Q)$
has as many blocks as $A$ has irreducible characters, and so we conclude that $A=1$.
\end{proof}


\end{document}